\newtheorem {theorem} {Theorem}
\newtheorem {proposition} [theorem]{Proposition}
\newcommand{\bg}{{\bf g}}
\newcommand{\f}{{\bf f}}
\title[On the higher order stroboscopic averaged functions]{Addendum to\\ Higher order stroboscopic averaged functions:\\ a general relationship with Melnikov functions}
\author[D.D. Novaes]
{Douglas D. Novaes$^{1}$}
\address{$^1$ Departamento de Matem\'{a}tica - Instituto de Matem\'{a}tica, Estat\'{i}stica e Computa\c{c}\~{a}o Cient\'{i}fica (IMECC) - Universidade
Estadual de Campinas (UNICAMP),  Rua S\'{e}rgio Buarque de Holanda, 651, Cidade Universit\'{a}ria Zeferino Vaz, 13083-859, Campinas, SP,
Brazil} \email{ddnovaes@unicamp.br}
\subjclass[2010]{34C29, 34E10, 34C25}
\keywords{averaging theory, Melnikov method, averaged functions, Melnikov functions, higher order analysis.}
\begin{document}

\maketitle

\begin{abstract}
This addendum presents a relevant stronger consequence of the main theorem of the paper "Higher order stroboscopic averaged functions: a general relationship with Melnikov functions", EJQTDE No. 77 (2021).
\end{abstract}

This addendum addresses the findings presented in the paper \cite{novaes21} titled "Higher order stroboscopic averaged functions: a general relationship with Melnikov functions" published in EJQTDE No. 77 (2021).

The main result of the referred paper,  \cite[Theorem A]{novaes21}, establishes a general relationship between averaged functions $\bg_i$ and Melnikov functions $\f_i$. As a direct consequence of this general relationship,  \cite[Corollary A]{novaes21}  states that if, for some $\ell\in\{2,\ldots,k\}$, either $\f_1=\cdots=\f_{\ell-1}=0$ or $\bg_1=\cdots=\bg_{\ell-1}=0$, then $\f_i=T\,\bg_i$ for $i\in\{1,\ldots,\ell\}$. This consequence was somewhat expected based on existing results in the literature within more restricted contexts. Here, we will demonstrate that under the same conditions, the relationship $\f_i=T\,\bg_i$ actually holds for every $i\in\{1,\ldots,2\ell-1\}$, which represents a more unexpected outcome. The expressions for $\bg_{2\ell}(z)$ and $\f_{2\ell}(z)$ will also be provided.

\begin{proposition}
Let $\ell\in\{2,\ldots,k\}$. If either $\f_1=\cdots=\f_{\ell-1}=0$ or $\bg_1=\cdots=\bg_{\ell-1}=0,$ then $\f_i=T\,\bg_i$ for $i\in\{1,\ldots,2\ell-1\}$, and 
\[
\bg_{2\ell}(z)=\dfrac{1}{T}\left( \f_{2\ell}(z)- \dfrac{1}{2} d \f_{\ell}(z)\cdot \f_{\ell}(z)\right) \,\, \text{or, equivalently,}\,\, \f_{2\ell}(z)=T\bg_{2\ell}(z)+\dfrac{T^2}{2} d \bg_{\ell}(z)\cdot \bg_{\ell}(z).
\]
\end{proposition}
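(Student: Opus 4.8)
The plan is to deduce the statement from the explicit relationship between the $\f_i$'s and the $\bg_i$'s furnished by \cite[Theorem A]{novaes21}, by a simple weight count. First I would normalize the hypothesis: both alternatives are equivalent to assuming $\bg_1=\cdots=\bg_{\ell-1}=0$. Indeed, if instead $\f_1=\cdots=\f_{\ell-1}=0$, then \cite[Corollary A]{novaes21} applied with this $\ell$ gives $\f_i=T\bg_i$ for $i\in\{1,\dots,\ell\}$, so $\bg_i=\f_i/T=0$ for $i\le\ell-1$; and in either case we also get $\f_i=T\bg_i$ for $i\le\ell$, in particular $\f_\ell=T\bg_\ell$, which will be used at the end. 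So from now on assume $\bg_1=\cdots=\bg_{\ell-1}=0$.

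Next, recall the structure of the relation in \cite[Theorem A]{novaes21}: for each index $i$,
\[
\f_i \;=\; T\,\bg_i \;+\; \sum_{\text{corrections}} c\,T^{r}\,\big(\text{composition of derivatives of }\bg_{j_1},\dots,\bg_{j_r}\big),
\]
where every correction term is multilinear of order $r\ge 2$ in the functions $\bg_{j_1},\dots,\bg_{j_r}$ with $j_1+\cdots+j_r=i$ and $j_m\ge 1$, and $c$ is a universal rational constant; equivalently, $\sum_i\e^i\f_i$ is the image of the autonomous field $W:=\sum_j\e^j\bg_j$ under the Lie series of its own time-$T$ flow, whose quadratic part is $\tfrac{T^2}{2}\,dW\cdot W=\tfrac{T^2}{2}\sum_i\e^i\sum_{j_1+j_2=i}d\bg_{j_1}\cdot\bg_{j_2}$. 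Under the assumption $\bg_1=\cdots=\bg_{\ell-1}=0$, any correction term containing a factor $\bg_{j_m}$ with $j_m<\ell$ vanishes, so a surviving term must have all $j_m\ge\ell$, forcing $i=\sum_{m}j_m\ge r\ell\ge 2\ell$.

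Consequently, for every $i$ with $1\le i\le 2\ell-1$ no correction survives and $\f_i=T\bg_i$, which is the first assertion. For $i=2\ell$, a surviving correction needs $r\ell\le 2\ell$, hence $r=2$ and $j_1=j_2=\ell$; there is exactly one such monomial, and by the quadratic part displayed above its coefficient is $\tfrac{T^2}{2}$, so $\f_{2\ell}=T\bg_{2\ell}+\tfrac{T^2}{2}\,d\bg_\ell\cdot\bg_\ell$. Solving for $\bg_{2\ell}$ and substituting $\bg_\ell=\f_\ell/T$ yields $\bg_{2\ell}=\tfrac1T\big(\f_{2\ell}-\tfrac12\,d\f_\ell\cdot\f_\ell\big)$; these are the two (equivalent) identities claimed.

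The substantive point — the only place where one must look inside \cite[Theorem A]{novaes21} rather than merely count weights — is twofold: (a) confirming that the relation has \emph{no} correction term that is linear in a single $\bg_j$, which is what makes the ``$\ge 2\ell$'' estimate available; and (b) pinning down the exact coefficient $\tfrac{T^2}{2}$ of the unique order-$2\ell$ survivor $d\bg_\ell\cdot\bg_\ell$. Both are immediate from the recursive formula of Theorem A; in fact (b) only requires its quadratic truncation, which is the classical relation $\f_2=T\bg_2+\tfrac{T^2}{2}\,d\bg_1\cdot\bg_1$ with the index $1$ replaced by $\ell$. I expect (b) — getting the constant and the power of $T$ exactly right from the combinatorics of Theorem A — to be the only mildly delicate step; everything else is the bookkeeping above.
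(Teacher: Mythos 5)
Your proposal is correct and follows essentially the same route as the paper: both reduce to $\bg_1=\cdots=\bg_{\ell-1}=0$ via Corollary A and then observe that every correction term in the Theorem A recursion is at least bilinear in the $\bg_j$'s with indices summing to $i$, so nothing survives below weight $2\ell$ and only the single quadratic term $\tfrac{T^2}{2}\,d\bg_\ell\cdot\bg_\ell$ survives at weight $2\ell$. The only cosmetic difference is that the paper carries out this weight count concretely on the Bell-polynomial arguments $\tilde y_{j-m+1}$ (showing $\tilde y_1=\cdots=\tilde y_{\ell-1}=0$ and isolating the pair $j=\ell$, $m=1$), whereas you invoke the multilinear structure abstractly; your points (a) and (b) are indeed exactly what the recursion delivers.
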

\begin{proof}
Given $\ell\in\{2,\ldots,k\}$, assume that either $\f_1=\cdots=\f_{\ell-1}=0$ or $\bg_1=\cdots=\bg_{\ell-1}=0$. From  \cite[Corollary A]{novaes21}, we have that 
\begin{equation}\label{relcor}
\bg_i=\f_i=0, \,\, \text{for}\,\, i\in\{1,\ldots,\ell-1\},\,\,\text{and}\,\, \bg_{\ell}=\dfrac{1}{T}\f_{\ell}.
\end{equation}

For any $i$, \cite[Theorem A]{novaes21} provides
\begin{equation}\label{rec:gi}
\begin{aligned}
\bg_i(z)=&\dfrac{1}{T}\left( \f_i(z)-\sum_{j=1}^{i-1}\sum_{m=1}^j\dfrac{1}{j!}d^m \bg_{i-j}(z) \int_0^T B_{j,m}\big(\tilde y_1,\ldots,\tilde y_{j-m+1}\big)(s,z)ds\right),
\end{aligned}
\end{equation}
where $\tilde y_i(t,z),$ for $i\in\{1,\ldots,k\},$ are polynomial in the variable $t$ recursively defined as follows:
\begin{equation}\label{tildeyi}
\begin{aligned}
\tilde y_1(t,z)=&t\, \bg_1(z)\vspace{0.3cm}\\
\tilde y_i(t,z)=& i!t\,\bg_i(z) +\sum_{j=1}^{i-1}\sum_{m=1}^j\dfrac{i!}{j!}d^m \bg_{i-j}(z) \int_0^t B_{j,m}\big(\tilde y_1,\ldots,\tilde y_{j-m+1}\big)(s,z)ds.
\end{aligned}
\end{equation}

Taking \eqref{relcor} into account, the function $\bg_{i-j}$ in \eqref{rec:gi} vanishes for $i-j\leq \ell-1$, that is, for $j\geq i-\ell+1$. Thus,
\begin{equation}\label{rec:gi2}
\begin{aligned}
\bg_i(z)=&\dfrac{1}{T}\left( \f_i(z)-\sum_{j=1}^{i-\ell}\sum_{m=1}^j\dfrac{1}{j!}d^m \bg_{i-j}(z) \int_0^T B_{j,m}\big(\tilde y_1,\ldots,\tilde y_{j-m+1}\big)(s,z)ds\right).
\end{aligned}
\end{equation}
Also, from \eqref{relcor} and \eqref{tildeyi}, one has 
\begin{equation}\label{tildeyi2}
\tilde y_1=\cdots\tilde y_{\ell-1}=0\,\,\text{and}\,\, \tilde y_{\ell}(t,z)=\ell! t \bg_{\ell}(z)=\dfrac{\ell!}{T} t \f_{\ell}(z).
\end{equation}

Now, let $i\in\{\ell+1,\ldots,2\ell-1\}.$  Thus, for $ j\leq i-\ell$ and $m\geq 1$, one has that
\[
j-m+1\leq i-\ell\leq 2\ell-1-\ell=\ell-1,
\]
which implies, from \eqref{tildeyi2}, that $\tilde y_1=\cdots=\tilde y_{j-m+1}=0$ in \eqref{rec:gi2}. Consequently, $\f_i(z)=T \bg_i(z)$.

Finally, from \eqref{rec:gi2},
\begin{equation}\label{rec:g2l}
\begin{aligned}
\bg_{2\ell}(z)=&\dfrac{1}{T}\left( \f_{2\ell}(z)-\sum_{j=1}^{\ell}\sum_{m=1}^j\dfrac{1}{j!}d^m \bg_{2\ell-j}(z) \int_0^T B_{j,m}\big(\tilde y_1,\ldots,\tilde y_{j-m+1}\big)(s,z)ds\right).
\end{aligned}
\end{equation}
Notice that, for $1\leq j\leq \ell$ and $1\leq m\leq j$, the relationship $j-m+1\geq \ell$ implies that $m=1$ and $j=\ell$, which are the only possible values for $m$ and $j$ for which $\tilde y_{j-m+1}$ in \eqref{rec:g2l} is not vanishing. In this case, from \eqref{tildeyi2}, $\tilde y_1=\cdots=\tilde y_{j-m}=0$ and $\tilde y_{j-m+1}=\tilde y_{\ell}=\ell! t \bg_{\ell}(z)$. Thus,
\[
\begin{aligned}
\bg_{2\ell}(z)=&\dfrac{1}{T}\left( \f_{2\ell}(z)-\dfrac{1}{\ell !}d \bg_{\ell}(z) \int_0^T B_{\ell,1}\big(0,\ldots,0,\ell! t \bg_{\ell}(z)\big)\,ds\right)\\
=&\dfrac{1}{T}\left( \f_{2\ell}(z)-\dfrac{1}{\ell !}d \bg_{\ell}(z) \int_0^T \ell! t \bg_{\ell}(z)\,ds\right)\\
=&\dfrac{1}{T}\left( \f_{2\ell}(z)- \dfrac{T^2}{2} d \bg_{\ell}(z)\cdot \bg_{\ell}(z)\right)=\dfrac{1}{T}\left( \f_{2\ell}(z)- \dfrac{1}{2} d \f_{\ell}(z)\cdot \f_{\ell}(z)\right).
\end{aligned}
\]
Equivalently,
\[
\f_{2\ell}(z)=T\bg_{2\ell}(z)+\dfrac{T^2}{2} d \bg_{\ell}(z)\cdot \bg_{\ell}(z).
\]
\end{proof}

\section*{Acknowledgements}
The author is partially supported by S\~{a}o Paulo Research Foundation (FAPESP) grants 2021/10606-0, 2018/13481-0, and 2019/10269-3, and by Conselho Nacional de Desenvolvimento Cient\'{i}fico e Tecnol\'{o}gico (CNPq) grant 309110/2021-1.

\end{document}